\tikzset{node distance=2.5cm, 
every state/.style={ 
semithick,
fill=gray!10},
initial text={}, 
double distance=2pt, 
every edge/.style={ 
draw,
->,>=stealth', 
auto,
semithick}}
\pgfplotsset{compat=newest}
\newlength\figH
\newlength\figW
\theoremstyle{definition}
\newtheorem{definition}{\protect\definitionname}
\newtheorem{assumption}{\protect\assumptionname}
\newtheorem{lemma}{\protect\lemmaname}
\newtheorem{remark}{Remark}
\theoremstyle{plain}
\newtheorem{theorem}{\protect\theoremname}
\newtheorem{problem}{Problem}
\DeclareMathOperator*{\argmax}{arg\,max}
\DeclareMathOperator{\adj}{Adj}
\DeclareMathOperator{\tr}{Tr}
\newcommand{\pushright}[1]{\ifmeasuring@#1\else\omit\hfill$\displaystyle#1$\fi\ignorespaces}
\providecommand{\assumptionname}{Assumption}
\providecommand{\definitionname}{Definition}
\providecommand{\lemmaname}{Lemma}
\providecommand{\theoremname}{Theorem}
\providecommand{\propositionname}{Proposition}
\newcommand{\norm}[1]{\left\lVert#1\right\rVert}
\newcommand{\Expectation}[1]{\mathbb{E}\left[#1\right]}
\newcommand{\trace}[1]{\tr\left(#1\right)}
\newcommand{\Adjacent}[2]{\adj_k(#1, #2)=1}
\newcommand{\simplex}{\phi}
\newcommand{\SH}{\mathcal{S}_H}
\newcommand{\seta}{\mathscr{A}}
\newcommand{\action}{\alpha}
    \newcommand{\Alex}[1]{\textcolor{blue}{Alex:~#1}} 
    \newcommand{\mh}[1]{\textcolor{red}{MH:~#1}} 
    \newcommand{\AuthorC}[1]{\textcolor{cc_color}{CC:~#1}} 
    \newcommand{\Alex}[1]{}
    \newcommand{\mh}[1]{}
    \newcommand{\AuthorC}[1]{}
\title{\LARGE \bf Guaranteed Feasibility in Differentially Private \\ Linearly Constrained Convex Optimization}
\author{
Alexander Benvenuti$^{1}$, Brendan Bialy$^{2}$, Miriam Dennis$^{2}$, Matthew Hale$^{1}$
\thanks{
$^{1}$School of  Electrical and Computer Engineering, Georgia Institute of Technology, Atlanta, GA USA.
Emails: \texttt{\{abenvenuti3,matthale\}@gatech.edu}.
}
\thanks{$^{2}$Munitions Directorate, Air Force Research Laboratory, Eglin AFB, FL USA.
Emails: \texttt{\{brendan.bialy,miriam.dennis.1\}@us.af.mil}.}
\thanks{
This work was partially supported by AFRL under grant FA8651-23-F-A008, 
NSF under CAREER grant 1943275, 
ONR under grant N00014-21-1-2502, AFOSR under grant FA9550-19-1-0169, and the NSF Graduate
Research Fellowship under grant DGE-2039655. Any opinions, findings and conclusions or recommendations expressed in this material are those of the authors and do not necessarily reflect the views of sponsoring agencies.
}
}
\begin{document}

\maketitle

\begin{abstract}
  Convex programming with linear constraints plays an important role in the operation of a number of everyday systems. 
  However, absent any additional protections, revealing
  or acting on the solutions to such problems may reveal information about their constraints, which can be sensitive. Therefore, in this paper, we introduce a method for solving convex programs while
  keeping linear constraints private. 
  First, we prove that this method is differentially private and always generates a feasible optimization problem (i.e., one whose solution exists). 
  Then we show that the solution to the privatized problem also satisfies the original, non-private constraints. 
    Next, we bound the expected loss in performance from privacy, which is measured by comparing
    the cost with privacy to that without privacy. 
    Simulation results apply this framework to constrained
    policy synthesis in a Markov decision process, and they 
    show that a typical privacy implementation induces only an approximately~$9\%$ loss in solution quality.
\end{abstract}






\section{Introduction}
Convex optimization problems with linear constraints appear in many applications, such as power grids~\cite{stott1979review}, transportation systems~\cite{hoffman1963simple}, and resource allocation problems~\cite{markowitz1953portfolio}. The constraints 
in such problems can be sensitive, e.g., the load flow in a power grid, the time to travel between locations, and the costs of resources, which may reveal information about
individuals and/or trade secrets. 
The solutions to these problems are necessary for these systems to operate; however, simply computing and using these solutions may reveal the sensitive constraints used to generate them. 

Interest has therefore arisen in solving these  problems while both (i) preserving the privacy of constraints, and (ii) ensuring that all constraints are still satisfied at a solution.
In this paper we address an open question posed in~\cite{munoz2021private}, namely, protecting the privacy of constraints, specifically the constraint coefficient matrix, while maintaining feasibility with respect to the original, non-private constraints.
For linear constraints~$Ax \leq b$,
the work in~\cite{munoz2021private} privatized~$b$
while ensuring constraint satisfaction, and~\cite{munoz2021private} identified the privatization of~$A$ with guaranteed constraint
satisfaction as an open problem. That is the problem we solve. 

To provide privacy to these constraints, we use differential privacy. Differential privacy is a statistical notion of privacy originally developed to protect entries in databases~\cite{dwork2006calibrating}. 
We use it in this work partly because of its immunity to post-processing~\cite{dwork2014algorithmic}, namely that arbitrary computations on private data do not weaken differential privacy. Therefore, we first privatize each constraint in the constraint coefficient matrix, then solve the resulting optimization problem, which is simply a form of post-processing
private data. Thus, the solution preserves the privacy of the constraints, as do any downstream computations that use it. 

Some common privacy mechanisms, e.g., the Gaussian and Laplace mechanisms~\cite{dwork2014algorithmic}, add noise with unbounded support. 
Here, such mechanisms can perturb constraints by arbitrarily far amounts, which can cause a solution
not to exist. 
Therefore, we use the truncated Laplace mechanism~\cite{munoz2021private, geng2020tight}, which allows us to privatize constraints such that they only become tighter. We use this property to ensure that 
(i) a solution always exists for a private problem, and (ii) 
the solution to the private problem satisfies the constraints of the original, non-private problem.
Then, we
bound the change in optimal cost due to privacy, 
which directly relates privacy to performance.   
%
%
To summarize, our contributions are: 
\begin{itemize}
    \item We develop a differential privacy mechanism for the coefficient matrix in linear constraints 
    (Theorem~\ref{thm:privacy}).
    \item We prove that privatized problems have solutions that satisfy the constraints of the original, non-private problem (Theorem~\ref{lem:feasible}).
    \item We bound the expected change in optimal cost due to
    privatizing constraints (Theorem~\ref{thm:lp_accuracy_e}).
    \item We empirically validate the performance of this method on constrained Markov decision processes (Section~\ref{sec:cmdp}).
\end{itemize}

\subsection{Related Work}
There is a large literature on differential privacy in optimization, specifically looking at privacy for objective functions~\cite{wang2016differentially, huang2015differentially, han2016differentially, nozari2016differentially, dobbe2018customized, lv2020differentially}. We differ because we privatize
constraints. 
Privacy for linear programming, a special case of convex optimization with linear constraints, was addressed in~\cite{hsu2014privately,cummings2015privacy,munoz2021private}. While~\cite{hsu2014privately, cummings2015privacy} both privatize constraints, 
they allow for constraint violation, which may be unacceptable, e.g., if constraints encode safety. 
As noted above, for constraints~$Ax \leq b$,~\cite{munoz2021private} privatizes~$b$,
while we privatize~$A$. 


\subsection{Notation}\label{subsec:notation}
For $N\in\mathbb{N}$, we use $[N] := \{1, 2, \ldots, N\}$. We use $\phi (B)$ to be the set of probability distributions over a finite set $B$, and $|\cdot|$ denotes the cardinality of a set. 
$\trace{M}$ denotes the trace of a square matrix~$M$.
We use~$\Expectation{X}$ to denote the expectation of a random variable~$X$.

\section{Background and Problem Formulation}\label{sec:background} 
This section gives background and problem
statements. 
\subsection{Convex Optimization with Linear Constraints}\label{subsec:LP}
We consider optimization problems of the form
    \begin{equation}
        \begin{aligned}
        &\begin{aligned}
            \underset{x}{\operatorname{maximize}} &\quad g(x)
        \end{aligned}
            \\
            &\begin{aligned}
                \text{ s.t. }Ax&\leq b\\
                x&\geq 0,
            \end{aligned}
        \end{aligned}   
        \tag{P}\label{opt:primal}
    \end{equation} 
where~$g:\mathbb{R}^n\to\mathbb{R}$ is $L$-Lipschitz and concave,~$A\in\mathbb{R}^{m\times n}$ is the ``constraint coefficient matrix'', and~$b\in\mathbb{R}^m$ is the ``constraint vector''. We use~$\seta$
to denote the set of all possible~$A$ matrices. 
\begin{assumption}\label{ass:bounded}
    The set~$\seta$ is bounded and the bounds are publicly available.
\end{assumption} 

Assumption~\ref{ass:bounded} is quite mild since the entries of~$A$ may represent physical quantities that do not exceed certain bounds, e.g., with voltages
in a power grid. 
We consider the case in which all constraints require privacy, though this approach can be applied as-is to any subset of constraints. 
\begin{assumption}\label{ass:measure}
    The feasible region~$\{x \in \mathbb{R}^n : Ax\leq b\}$ has non-empty interior for all~$A\in\seta$.
\end{assumption}

\begin{remark}\label{rem:equality}
    If Assumption~\ref{ass:measure} fails, then any perturbation to the constraints can cause infeasibility, making such constraints fundamentally incompatible with privacy.
\end{remark}

It has been observed in the 
literature~\cite{hsu2014privately,cummings2015privacy,munoz2021private}
that the public release of the solution to Problem~\eqref{opt:primal} 
may reveal the constraint coefficient matrix used to generate it, and thus we will apply differential privacy to protect~$A$.

\subsection{Differential Privacy}
We will apply differential privacy directly to the constraint matrix~$A$, and this approach is known as ``input perturbation" in the literature. Overall, the goal of differential privacy is to make ``similar" pieces of data appear approximately indistinguishable. The notion of ``similar" is defined by an adjacency relation. Many adjacency relations exist, and we present the one used in the remainder of the paper; see~\cite{dwork2014algorithmic} for additional background.
\begin{definition}[Adjacency]\label{def:adj2}
    For a constant~$k > 0$, two vectors~$v, w\in\mathbb{R}^n$ are said to be adjacent if there exists an index~$j \in [n]$
such that (i)~$v_i = w_i$ for all~$i \in [n] \backslash \{j\}$
and (ii)
        $\norm{v-w}_1\leq k.$
    If two vectors~$v$ and~$w$ are adjacent, we write~$\Adjacent{v}{w}$; otherwise 
    we write~$\adj_k(v, w)=0$. 
\end{definition}
In words, two vectors are adjacent if they differ in up to one entry and that difference is bounded by~$k$. 

To make adjacent pieces of data appear approximately indistinguishable,
we implement differential privacy, which is done using a
randomized map called a ``mechanism". 
In its general form, differential privacy
protects a sensitive piece of data~$x$ by
randomizing some function of it, say~$f(x)$.
In this work, the sensitive data we consider
is the matrix~$A$, and we privatize
the output of the identity map acting
on~$A$, which privatizes~$A$ itself.
This is known as ``input perturbation'',
and next we define differential privacy 
for this approach.


\begin{definition}[Differential Privacy; \cite{dwork2014algorithmic}]\label{def:dp}
    Fix a probability space $(\Omega, \mathcal{F}, \mathbb{P})$. Let~$k > 0$,~$\delta\in[0, \frac{1}{2})$, and $\epsilon>0$ be given. A mechanism $\mathscr{M}:\mathbb{R}^{m}\times\Omega\to\mathbb{R}^m$ is ($\epsilon, \delta)$-differentially private if for all $v, w\in\mathbb{R}^{m}$ that are adjacent in the sense of Definition \ref{def:adj2},  we have
            $\mathbb{P}[\mathscr{M}(v)\in T] \leq e^\epsilon \mathbb{P}[\mathscr{M}(w)\in T]+\delta$
             for all Borel measurable sets  $T\subseteq\mathbb{R}^{m}$. 
\end{definition}

The strength of privacy is set by~$\epsilon$ and~$\delta$, and smaller values of both imply stronger privacy. The value of~$\epsilon$ quantifies the leakage of sensitive information, and typical values for it are~$0.1$ to~$10$~\cite{hsu2014differential}.
The value of~$\delta$ can be interpreted as the probability
that more information is leaked than~$\epsilon$ allows, and typical
values for~$\delta$ range 
from~$0$ to~$0.05$~\cite{hawkins2023node}. 

A feature
of differential privacy is that it is
immune to post-processing, which we formalize
next. 

 

\begin{lemma}[Immunity to Post-Processing; \cite{dwork2014algorithmic}]\label{lem:arbitrary}
    Let~$\mathscr{M}:\mathbb{R}^n\times \Omega \to \mathbb{R}^m$ be an~$(\epsilon, \delta)$-differentially private mechanism. Let~$h:\mathbb{R}^m\to\mathbb{R}^p$ be an arbitrary mapping. Then the composition~$h\circ\mathscr{M}:\mathbb{R}^n\to\mathbb{R}^p$ is~$(\epsilon, \delta)$-differentially private.
\end{lemma}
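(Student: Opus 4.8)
The plan is to reduce the claim about the composed map~$h\circ\mathscr{M}$ to a statement about~$\mathscr{M}$ alone by a preimage argument. Fix two adjacent inputs~$v, w\in\mathbb{R}^n$ in the sense of Definition~\ref{def:adj2}, and fix an arbitrary Borel measurable set~$S\subseteq\mathbb{R}^p$. The central observation is that the event~$\{h\circ\mathscr{M}\in S\}$ coincides with the event~$\{\mathscr{M}\in h^{-1}(S)\}$. Accordingly, I would first define~$T := h^{-1}(S) = \{y\in\mathbb{R}^m : h(y)\in S\}$ and rewrite~$\mathbb{P}[h(\mathscr{M}(v))\in S] = \mathbb{P}[\mathscr{M}(v)\in T]$, and likewise with~$w$ in place of~$v$.

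Next I would apply the defining inequality of~$(\epsilon,\delta)$-differential privacy (Definition~\ref{def:dp}) to~$\mathscr{M}$ directly at the set~$T$, which gives~$\mathbb{P}[\mathscr{M}(v)\in T] \leq e^{\epsilon}\,\mathbb{P}[\mathscr{M}(w)\in T] + \delta$. Substituting the identity from the first step back into both sides yields~$\mathbb{P}[h(\mathscr{M}(v))\in S] \leq e^{\epsilon}\,\mathbb{P}[h(\mathscr{M}(w))\in S] + \delta$. Since~$S$ was an arbitrary measurable subset of~$\mathbb{R}^p$ and~$v, w$ an arbitrary adjacent pair, this is precisely the condition in Definition~\ref{def:dp} for~$h\circ\mathscr{M}$ to be~$(\epsilon,\delta)$-differentially private, completing the argument.

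The one point requiring care — and the main obstacle — is ensuring that~$T = h^{-1}(S)$ is itself a Borel measurable subset of~$\mathbb{R}^m$, since the privacy guarantee for~$\mathscr{M}$ in Definition~\ref{def:dp} is asserted only over measurable sets~$T$. This holds whenever~$h$ is Borel measurable, so I would either take measurability of~$h$ to be the operative reading of ``arbitrary mapping,'' or note that it is automatic in our setting, where~$h$ is the measurable map sending a privatized constraint matrix to the solution of the resulting convex program. For a randomized post-processing map one reduces to the deterministic case by conditioning on the internal randomness of~$h$ and taking an expectation, using that the~$(\epsilon,\delta)$ bound is preserved under convex combinations; the deterministic preimage argument above is the essential content.
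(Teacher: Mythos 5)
Your proof is correct and is essentially the standard argument from the cited reference (Dwork--Roth): the paper itself gives no proof of this lemma, deferring entirely to \cite{dwork2014algorithmic}, whose proof is exactly your preimage reduction $T = h^{-1}(S)$ followed by the convex-combination reduction for randomized $h$. Your remark that ``arbitrary mapping'' must be read as ``measurable mapping'' for $h^{-1}(S)$ to be a legitimate test set in Definition~\ref{def:dp} is a genuine and appropriate refinement of the statement as written, and it is satisfied in the paper's application, where $h$ maps the privatized matrix $\tilde{A}$ to the solution of Problem~\eqref{opt:DP}.
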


In the context of convex programming with linear constraints, Lemma~\ref{lem:arbitrary} implies that we can privatize the constraint coefficient matrix~$A$, and the solution to the privatized optimization 
problem preserves the privacy of~$A$ by virtue of being post-processing. 

\subsection{Problem Statements}
Consider Problem~\eqref{opt:primal} . 
Computing~$x^*$ depends on the sensitive constraint coefficient matrix~$A$, and simply computing and using~$x^*$ can reveal information about~$A$. Therefore, we aim to develop a framework for solving problems in the form of Problem~\eqref{opt:primal} that preserves the privacy of~$A$ while still satisfying the constraints in Problem~\eqref{opt:primal} . 
This will be done by solving the following problems.
\begin{problem}\label{prob:private}
    Develop a privacy mechanism to privatize the constraint coefficient matrix.
\end{problem}

\begin{problem}\label{prob:feasible}
    Prove that the privacy mechanism produces constraints such that the solution to the privately generated optimization problem also satisfies the constraints of the original, non-private problem.
\end{problem}

\begin{problem}\label{prob:accurate}
    Bound the average change in the cost between the non-private optimal solution and the privatized optimal solution.
\end{problem}

\section{Private Linear Constraints} \label{sec:privacy}
In this section, we solve Problems~\ref{prob:private} and~\ref{prob:feasible}. Specifically, we (i) detail our approach to implementing privacy for the~$A$ matrix, (ii) prove that is it~$(\epsilon, \delta)$-differential private, and (iii) show that solutions computed with private constraints also satisfy
the corresponding non-private constraints. 
We note that entries of~$A$ that equal zero may represent that there is physically no relationship between a decision variable and a constraint. For example, in a Markov decision process that models a traffic system,
a zero transition probability may indicate that one street
does not connect to another, which is publicly known. 
Thus, we consider the number of zero entries to be public information, and 
 we will only privatize the non-zero entries. 

\subsection{Implementing Differential Privacy}
 For a given~$A$ matrix, 
 we use~$a^{0}_{i}$ to denote the vector of non-zero entries in row $i$, and we use~$a^{0}_{i,j}$ to denote the~$j^{th}$ entry in that vector. 
To implement privacy we will 
compute
\begin{equation} \label{eq:tildeaij}
\tilde{a}^{0}_{i,j} = a^{0}_{i,j} + s_i +z_{i, j},
\end{equation}
where~$z_{i,j}$ is the noise added to each entry to enforce privacy; we add~$s_i$ to enforce that all the coefficient values become larger, thus tightening the constraints. 

We will add bounded noise to ensure that~$z_{i, j}$ only tightens the constraints when privatizing~$A$; if the constraints are only tightened,
then privacy can only shrink the feasible region, and thus
satisfaction of the privatized constraints implies satisfaction
of the original, non-private constraints. 
We do this with the truncated Laplace mechanism. 

\begin{lemma}[Truncated Laplace Mechanism;~\cite{munoz2021private, geng2020tight}]\label{lem:t_lap_mech}
    Let privacy parameters~$\epsilon>0$ and~$\delta\in(0, \frac{1}{2}]$ be given, and fix the adjacency relation from Definition~\ref{def:adj2}.
    The Truncated Laplace Mechanism takes sensitive data~$x\in\mathbb{R}^m$ as input and outputs private data~$z \in \mathbb{R}^m$, where~$z_i \in \mathcal{S}$ and ${z_i\sim\mathcal{L}_T(\sigma, \mathcal{S})}$ for all~$i\in[m]$. Here,~$\mathcal{L}_T(\sigma, \mathcal{S})$ is the truncated Laplace distribution with density
        $f(z_i) = \frac{1}{Z_i}\exp\left(-\frac{1}{\sigma}|z_i|\right),$
    where~$\mathcal{S} := [-s, s]$ and the values of~$s$ and~$-s$ are bounds on the support of the private outputs. We define~$Z_i=\mathbb{P}(z_i\leq |s|)$, and~$\sigma$ is the scale parameter of the distribution. The truncated Laplace mechanism is~$(\epsilon, \delta)$-differentially private if~$\sigma\geq \frac{k}{\epsilon}$ and
        $s = \left[\frac{k}{\epsilon}\log\left(\frac{m(e^\epsilon-1)}{\delta}+1\right) \right].$
\end{lemma}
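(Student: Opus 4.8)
The plan is to verify Definition~\ref{def:dp} directly, by bounding the pointwise likelihood ratio of the mechanism's output under two adjacent inputs and splitting the output space according to where that ratio is finite. First I would invoke the adjacency relation of Definition~\ref{def:adj2}: if $v,w\in\mathbb{R}^m$ are adjacent, they differ in exactly one coordinate, say the $j^{\text{th}}$, with $|v_j-w_j|=\norm{v-w}_1\leq k$. Because the mechanism draws the per-coordinate noise independently from a common density, the output densities of $\mathscr{M}(v)$ and $\mathscr{M}(w)$ coincide in every coordinate $i\neq j$, so those factors cancel in the joint likelihood ratio and the whole analysis collapses onto coordinate $j$. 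Writing $g_v(t)=\frac{1}{Z}\exp(-|t-v_j|/\sigma)$ for $t\in[v_j-s,v_j+s]$ and $g_v(t)=0$ otherwise (and likewise $g_w$), I would partition $\mathbb{R}$ into the \emph{overlap} region, where both $g_v$ and $g_w$ are positive, and the \emph{overflow} region, where $g_v>0$ but $g_w=0$.

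On the overlap region the normalizer $Z$ is common to both densities and cancels, so the reverse triangle inequality gives
\begin{equation}
\frac{g_v(t)}{g_w(t)}=\exp\!\left(\frac{|t-w_j|-|t-v_j|}{\sigma}\right)\leq \exp\!\left(\frac{|v_j-w_j|}{\sigma}\right)\leq \exp\!\left(\frac{k}{\sigma}\right)\leq e^{\epsilon},
\end{equation}
where the final inequality is exactly the hypothesis $\sigma\geq k/\epsilon$. This produces the multiplicative factor: for any Borel set $T$, the part of $\mathbb{P}[\mathscr{M}(v)\in T]$ contributed by the overlap region is at most $e^{\epsilon}\,\mathbb{P}[\mathscr{M}(w)\in T]$.

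It remains to absorb the overflow region into $\delta$, and this is the step I expect to be the main obstacle. The overflow is a band of width $|v_j-w_j|\leq k$ pressed against the truncation boundary, and its mass under $\mathscr{M}(v)$ is a tail integral of the truncated Laplace density; using $Z=2\sigma\big(1-e^{-s/\sigma}\big)$ this worst-case mass takes the form $\frac{e^{-s/\sigma}\,(e^{k/\sigma}-1)}{2\,(1-e^{-s/\sigma})}$ after maximizing over shifts of size $k$. To reproduce the stated closed form I would then budget this tail across the $m$ output coordinates by a (conservative) union bound, which is where the factor $m$ enters the threshold, set $\sigma=k/\epsilon$, and solve the resulting tail inequality for $s$; inverting the exponential into a logarithm yields $s=\frac{k}{\epsilon}\log\!\big(\frac{m(e^\epsilon-1)}{\delta}+1\big)$, with $\delta>0$ ensuring the argument exceeds $1$ so that $s$ is finite and positive. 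Combining the two regions gives $\mathbb{P}[\mathscr{M}(v)\in T]\leq e^{\epsilon}\mathbb{P}[\mathscr{M}(w)\in T]+\delta$ for every Borel $T$, which is precisely Definition~\ref{def:dp}. The delicate points are the exact constant bookkeeping in the overflow bound — reconciling the normalizer $Z$, the two-sided truncation, and the per-coordinate $\delta$-budget so that the inequality solves cleanly into the stated threshold — together with confirming that the worst-case shift direction has been correctly accounted for; the overlap estimate, by contrast, is routine once the single-coordinate reduction is in place.
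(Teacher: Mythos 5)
The paper does not actually prove this lemma: it is imported from~\cite{munoz2021private} and~\cite{geng2020tight} and used as a black box, so there is no in-paper argument to compare against. Your proposal reconstructs the standard proof of that imported result and is essentially sound: the single-coordinate reduction is licensed by Definition~\ref{def:adj2} (adjacent vectors agree off one index), the overlap bound $e^{k/\sigma}\le e^{\epsilon}$ follows from $\sigma\ge k/\epsilon$ exactly as you write, and your worst-case overflow mass $\frac{e^{-s/\sigma}\left(e^{k/\sigma}-1\right)}{2\left(1-e^{-s/\sigma}\right)}$ is the correct tail integral against the normalizer $Z=2\sigma\left(1-e^{-s/\sigma}\right)$. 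The one place your bookkeeping does not close exactly is the final inversion: solving your tail inequality at $\sigma=k/\epsilon$ with a per-coordinate budget $\delta/m$ yields the threshold $s\ge\frac{k}{\epsilon}\log\left(\frac{m(e^{\epsilon}-1)}{2\delta}+1\right)$, not the stated $\frac{k}{\epsilon}\log\left(\frac{m(e^{\epsilon}-1)}{\delta}+1\right)$. This is harmless rather than a gap: the stated $s$ is larger, the overflow mass is decreasing in $s$, and the overlap bound is independent of $s$, so the stated value still certifies $(\epsilon,\delta)$-differential privacy --- you just need to phrase the last step as ``the stated $s$ satisfies the tail inequality'' rather than ``is the solution of it.'' (Similarly, under this paper's adjacency only one coordinate ever differs between a fixed adjacent pair, so the union bound over $m$ coordinates that introduces the factor $m$ is conservative rather than necessary; it again only enlarges $s$.)
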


We apply this mechanism to each row of a constraint coefficient matrix~$A$, which provides row-wise privacy.
This approach in fact provides privacy to the
entire~$A$ matrix due to the following. 

\begin{lemma}[Parallel Composition;~\cite{ponomareva2023dp}]
    \label{lem:parallel-comp}
    Consider a database~$D$ partitioned into disjoint subsets~$D_1, D_2, \ldots, D_N$,
    and suppose that there are privacy mechanisms~$\mathscr{M}_1, \mathscr{M}_2, \ldots, \mathscr{M}_N$,
    where~$\mathscr{M}_i$ is~$(\epsilon, \delta)$-differentially private for all~$i \in [N]$. 
    Then the release of the queries~$\mathscr{M}_1(D_1), \mathscr{M}_2(D_2), \ldots, \mathscr{M}_N(D_N)$
    provides~$D$ with~$(\epsilon,\delta)$-differential privacy.     
\end{lemma}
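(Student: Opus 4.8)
The plan is to exploit the defining feature of this adjacency relation: under Definition~\ref{def:adj2}, two adjacent databases differ in at most a single entry, and because the subsets~$D_1, \ldots, D_N$ are disjoint, that lone entry belongs to exactly one block~$D_j$. This single observation is the crux of why the privacy budget does not accumulate across the~$N$ mechanisms, in contrast to sequential composition, where a change could be ``seen'' by every mechanism.

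First I would fix two adjacent databases~$D$ and~$D'$ and let~$j \in [N]$ be the index of the unique block containing the differing entry. By disjointness, the restrictions satisfy~$D_i = D_i'$ for every~$i \neq j$, while~$D_j$ and~$D_j'$ differ in a single entry and hence are themselves adjacent in the sense of Definition~\ref{def:adj2}. Next, since the mechanisms draw independent noise, the joint output distribution factorizes: writing~$P_i$ and~$Q_i$ for the laws of~$\mathscr{M}_i(D_i)$ and~$\mathscr{M}_i(D_i')$, the law of the released tuple under~$D$ is the product measure~$P = \bigotimes_{i=1}^N P_i$, and likewise~$Q = \bigotimes_{i=1}^N Q_i$ under~$D'$. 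The two facts from the previous step translate to~$P_i = Q_i$ for all~$i \neq j$ and, by the~$(\epsilon,\delta)$-privacy of~$\mathscr{M}_j$ applied to the adjacent inputs~$D_j, D_j'$, to the single-coordinate bound~$P_j(B) \leq e^\epsilon Q_j(B) + \delta$ for every Borel set~$B$.

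The remaining step is to lift this single-coordinate inequality to an arbitrary Borel set~$T$ in the full product space, which I would do by a Fubini argument rather than by restricting to product (rectangle) sets. Collecting the unchanged coordinates into one variable with common law~$\mu := \bigotimes_{i \neq j} P_i = \bigotimes_{i \neq j} Q_i$, so that~$P = \mu \otimes P_j$ and~$Q = \mu \otimes Q_j$, and letting~$T_y$ denote the section of~$T$ in the~$j$-th coordinate for a fixed value~$y$ of the remaining coordinates, integration of the single-coordinate bound against~$\mu$ yields
\begin{equation}
    P(T) = \int P_j(T_y)\, d\mu(y) \leq \int \bigl(e^\epsilon Q_j(T_y) + \delta\bigr)\, d\mu(y) = e^\epsilon Q(T) + \delta,
\end{equation}
where the final equality uses that~$\mu$ is a probability measure, so the additive term integrates to exactly~$\delta$ and~$\int Q_j(T_y)\, d\mu(y) = Q(T)$. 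Since~$D$ and~$D'$ were arbitrary adjacent databases and~$T$ an arbitrary Borel set, this is precisely the~$(\epsilon,\delta)$-differential privacy claimed for the released tuple.

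I expect the main obstacle to be the measure-theoretic bookkeeping of the last paragraph: one must verify that the product-measure factorization and the Fubini interchange are valid for general, non-rectangular Borel sets, and that the mass of the unchanged coordinates is correctly absorbed so that the additive~$\delta$ is not inflated (this is where~$\mu$ being a probability measure is essential). The conceptual content---that adjacency touches only one block and therefore only one mechanism---is immediate; the care lies entirely in making the single-coordinate-to-product-space extension rigorous.
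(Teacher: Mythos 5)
Your proof is correct. Note, however, that the paper offers no proof of this lemma at all — it is imported verbatim from the cited reference \cite{ponomareva2023dp} as background — so there is no in-paper argument to compare against. Your argument is the standard one: adjacency under Definition~\ref{def:adj2} perturbs a single entry, disjointness localizes that perturbation to exactly one block~$D_j$, the joint output law factorizes by independence of the mechanisms' randomness, and the Fubini step correctly shows that the additive~$\delta$ is paid only once (integrating the constant~$\delta$ against the probability measure~$\mu$ of the unchanged coordinates) rather than accumulating over the~$N$ mechanisms. The one hypothesis you invoke that is not explicit in the lemma statement is that the~$\mathscr{M}_i$ draw independent noise; this is universally assumed in parallel composition and is satisfied by Algorithm~\ref{algo:solve}, which draws fresh truncated-Laplace noise per row, so it is not a gap in context, though it would be worth stating as a standing assumption if the lemma were to be proved rather than cited.
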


We consider~$A$ and partition it
into its rows, then privatize each row
individually. 
Lemma~\ref{lem:parallel-comp} ensures that
doing so provides~$(\epsilon,\delta)$-differential
privacy to the~$A$ matrix as a whole. 

Along with privacy, 
we must also enforce feasibility. 
In order to guarantee that the privately obtained solution~$\tilde{x}^*$ is feasible with respect to the non-private problem, it is clear 
that the two problems must have
at least one feasible point in common. 
We state this formally in the following assumption.

\begin{assumption}[Perturbed Feasibility]\label{ass:feast}
    The set~$S=\bigcap_{A\subseteq\seta}\left\{x: Ax\leq b\right\}$ is not empty.
\end{assumption}

In words, Assumption~\ref{ass:feast} says that there must exist at least
one point that satisfies the constraints produced by
every realization of the constraint coefficient matrix~$A$. 
With Assumption~\ref{ass:feast}, we post-process~$\tilde{a}_{i,j}$ from~\eqref{eq:tildeaij} 
to obtain
\begin{equation} \label{eq:abarpost}
\bar{a}^{0}_{i,j} = \min\big\{\tilde{a}^0_{i,j}, \sup_{\seta} a^{0}_{i,j} \big\}, 
\end{equation}
and we 
do so for each~$(i, j)$ such that~$a_{i,j}$ is non-zero.
The output of these computations is the private
constraint coefficient matrix, denoted~$\tilde{A}$. 

\begin{remark}
    Taking the minimum in~\eqref{eq:abarpost} ensures that each entry in~$\tilde{A}$ appears in some matrix in~$\seta$. 
    The supremum is finite since~$\seta$ is bounded and does not depend on sensitive information according to Assumption~\ref{ass:bounded}. 
\end{remark}
With this, we solve the optimization problem
    \begin{equation}
        \begin{aligned}
        &\begin{aligned}
            \underset{x}{\operatorname{maximize}} &\quad g(x)
        \end{aligned}
            \\
            &\begin{aligned}
                \text{ s.t. }\tilde{A}x&\leq b\\
                x&\geq 0.
            \end{aligned}
        \end{aligned}   
        \tag{DP-P}\label{opt:DP}
    \end{equation} 

Algorithm~\ref{algo:solve} provides a unified
overview of our approach. 

    \begin{algorithm}
    \caption{Privately Solving Convex Problems with Linear Constraints}
    \SetKwFor{ForAll}{for all}{do}{end}
    \label{algo:solve}
    \SetAlgoLined
    \textbf{Inputs}: Problem~\eqref{opt:primal},
    $\epsilon$, $\delta$, $k$;\\
    \textbf{Outputs}: Privacy-preserving solution $\tilde{x}^*$;\\
    \vspace{1mm}
    Set $\sigma = \frac{k}{\epsilon}$;\\
    \ForAll{$i\in[m]$}
        {Count the non-zero entries in row $i$, namely $n^0_i$;\\
        Compute the support for the truncated Laplace mechanism, i.e., $s_i =\left[\frac{k}{\epsilon}\log\left(\frac{n^0_i(e^\epsilon-1)}{\delta}+1\right) \right]$;\\
        \ForAll{$j\in[n]$}
        {Generate $\tilde{a}^{0}_{i,j} = a^{0}_{i,j} + s_i +z_{i, j}$ using the truncated Laplace Mechanism in~\eqref{eq:tildeaij};\\
        Post-process using~\eqref{eq:abarpost}, i.e., compute $\bar{a}^{0}_{i,j} = \min\{\tilde{a}^0_{i,j}, \sup\limits_{\seta} a^{0}_{i,j}\}$;}}
    Form $\tilde{A}$ by replacing each non-zero entry~$a^0_{i,j}$ in~$A$ with $\bar{a}^{0}_{i,j}$;\\
    Solve Problem~\eqref{opt:DP} (via any algorithm) to find~$\tilde{x}^*$
\end{algorithm}


\subsection{Characterizing Privacy}
Next we prove that Algorithm~\ref{algo:solve} preserves the privacy of~$A$.

\begin{theorem}[Solution to Problem~\ref{prob:private}]\label{thm:privacy}
    Fix an adjacency parameter~$k > 0$, let privacy parameters~$\epsilon>0$ and~$\delta\in[0, \frac{1}{2})$ be given, and let Assumptions~\ref{ass:bounded}-\ref{ass:feast} hold. Then Algorithm~\ref{algo:solve}
    keeps~$A$~$(\epsilon, \delta)$-differential private with respect to the adjacency relation in Definition~\ref{def:adj2}.
\end{theorem}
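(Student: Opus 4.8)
The plan is to assemble the claim from three results already in hand: the truncated Laplace mechanism (Lemma~\ref{lem:t_lap_mech}), parallel composition (Lemma~\ref{lem:parallel-comp}), and immunity to post-processing (Lemma~\ref{lem:arbitrary}). At a high level, Algorithm~\ref{algo:solve} privatizes each row of $A$ independently with an $(\epsilon,\delta)$-differentially private mechanism, combines these per-row guarantees into a guarantee for the whole matrix, and then performs only data-independent operations on the privatized matrix.

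First, I would fix a row index $i$ and gather its non-zero entries into a vector $a^0_i$ of dimension $n^0_i$, where $n^0_i$ is the public count of non-zero entries in row $i$. I would then check that the noise-injection step~\eqref{eq:tildeaij} is exactly an instance of the truncated Laplace mechanism of Lemma~\ref{lem:t_lap_mech} applied to $a^0_i$, identifying $m = n^0_i$, scale $\sigma = k/\epsilon$, and support bound $s_i$ as set in Algorithm~\ref{algo:solve}. Since these values meet the hypotheses of Lemma~\ref{lem:t_lap_mech}, the map from $a^0_i$ to its noisy image is $(\epsilon,\delta)$-differentially private in the sense of Definition~\ref{def:adj2}. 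The additive term $s_i$ is a deterministic translation that depends only on the public quantity $n^0_i$, so it leaves the privacy guarantee unchanged (it may be absorbed as a recentering of the noise or deferred to the post-processing step).

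Next, I would lift this row-wise privacy to the entire matrix using Lemma~\ref{lem:parallel-comp}. I would regard $A$ as a database partitioned into its $m$ rows, with the independent per-row mechanisms from the previous step playing the roles of $\mathscr{M}_1, \ldots, \mathscr{M}_m$. The key point is that the matrix-level adjacency induced by Definition~\ref{def:adj2} --- two matrices differing in a single entry by at most $k$ --- confines any change to exactly one row, whose two versions are vector-adjacent, while every other row is identical across the two matrices and hence produces an identically distributed output. Parallel composition then yields $(\epsilon,\delta)$-differential privacy for the joint release of all privatized rows, i.e., for $A$ as a whole, with the common parameters $(\epsilon,\delta)$ rather than a degraded pair.

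Finally, I would show that the remaining steps of Algorithm~\ref{algo:solve} are post-processing. The clipping in~\eqref{eq:abarpost} compares each noisy entry against the public, finite threshold $\sup_{\seta} a^0_{i,j}$, which by Assumption~\ref{ass:bounded} carries no dependence on sensitive data; assembling $\tilde{A}$ and solving Problem~\eqref{opt:DP} for $\tilde{x}^*$ are likewise maps applied only to already-privatized quantities. Immunity to post-processing (Lemma~\ref{lem:arbitrary}) therefore propagates the $(\epsilon,\delta)$ guarantee all the way to the released solution. I expect the parallel-composition step to require the most care: the argument hinges on stating precisely how matrix adjacency decomposes into one vector-adjacent row with all others held fixed, so that Lemma~\ref{lem:parallel-comp} applies with undegraded parameters and the identical, unchanged rows drop out of the differential-privacy inequality.
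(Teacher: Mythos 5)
Your proposal is correct and follows essentially the same route as the paper's proof: invoke Lemma~\ref{lem:t_lap_mech} row-wise with $m=n^0_i$ and $\sigma=k/\epsilon$, treat the clipping in~\eqref{eq:abarpost} and the solve of Problem~\eqref{opt:DP} as post-processing via Lemma~\ref{lem:arbitrary}, and combine the rows via parallel composition (Lemma~\ref{lem:parallel-comp}). Your added remarks --- that the deterministic shift $s_i$ depends only on the public count $n^0_i$, and that matrix adjacency confines any change to a single vector-adjacent row --- are correct elaborations of steps the paper leaves implicit, not a different argument.
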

\begin{proof}
    See Appendix~\ref{pf:privacy}.
\end{proof}
Theorem~\ref{thm:privacy} allows us to privatize each constraint individually, and the resulting constraint coefficient matrix~$\tilde{A}$ will be~$(\epsilon, \delta)$-differentially private. 
\begin{figure*}[t]
\begin{equation}\label{eq:xi}
    \xi = \begin{cases}
        \sqrt{\sum_{j=1}^{m}\left(2m\left(\frac{k}{\epsilon}\right)^2 n^0_j+ \left(\frac{n^0_jk}{\epsilon}\log\left(\frac{n_j^0(e^{\epsilon}-1)}{\delta}+1\right)\right)^2\right)} &\text{if }a_{i,j}+2s_i< \sup_{\seta} a_{i,j} \text{ for all }i\text{ and }j\\
        \norm{A-\bar{A}}_F & \text{if there exists  } i\text{ and }j \text{ s.t. }a_{i,j}+2s_i\geq \sup_{\seta} a_{i,j}.
    \end{cases}
\end{equation}
\hrulefill
\end{figure*}Solving Problem~\eqref{opt:DP} then preserves the privacy of~$A$, and the resulting solution, namely~$\tilde{x}^*$, can be released and/or acted on without harming privacy.

\begin{theorem}[Solution to Problem~\ref{prob:feasible}]\label{lem:feasible}
    Fix an adjacency parameter~$k > 0$,
    let privacy parameters~$\epsilon>0$ and~$\delta\in[0, \frac{1}{2})$ be given, and let Assumptions~\ref{ass:bounded}-\ref{ass:feast} hold. Fix a constant vector~$b$. Then the optimization problem~\eqref{opt:DP} produces a solution that also satisfies the original, non-private constraints. 
\end{theorem}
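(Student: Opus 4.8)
The plan is to show that privatization only \emph{tightens} the constraints, so that the feasible region of~\eqref{opt:DP} is contained in that of~\eqref{opt:primal}. Once this containment holds, every feasible point of the private problem---in particular its optimizer~$\tilde{x}^*$---automatically satisfies~$Ax\leq b$ and~$x\geq 0$, which is exactly the claim. Everything reduces to the entrywise inequality~$\tilde{A}\geq A$, so I would prove that first, deduce the containment, and finally confirm that~\eqref{opt:DP} is itself feasible so that a solution exists to speak of.

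To obtain~$\tilde{A}\geq A$, consider a privatized (nonzero) entry. By the support of the truncated Laplace distribution in Lemma~\ref{lem:t_lap_mech} the noise obeys~$z_{i,j}\geq -s_i$, so~\eqref{eq:tildeaij} gives~$\tilde{a}^0_{i,j}=a^0_{i,j}+s_i+z_{i,j}\geq a^0_{i,j}$; the deterministic shift~$s_i$ exactly cancels the most negative admissible noise. The post-processing in~\eqref{eq:abarpost} then takes a minimum against~$\sup_{\seta}a^0_{i,j}$, and since the true~$A$ belongs to~$\seta$ we have~$\sup_{\seta}a^0_{i,j}\geq a^0_{i,j}$; hence both arguments of the minimum dominate~$a^0_{i,j}$ and so~$\bar{a}^0_{i,j}\geq a^0_{i,j}$. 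The zero entries are never perturbed, so they match with equality. Collecting the cases yields~$\tilde{A}\geq A$ entrywise.

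Containment is then immediate from sign monotonicity. For any~$x$ feasible for~\eqref{opt:DP} we have~$x\geq 0$, and since each entry of~$\tilde{A}-A$ is nonnegative, every component of~$(\tilde{A}-A)x$ is nonnegative, so~$Ax\leq\tilde{A}x\leq b$. Therefore~$x$ is feasible for~\eqref{opt:primal}, i.e.,~$\{x:\tilde{A}x\leq b,\,x\geq 0\}\subseteq\{x:Ax\leq b,\,x\geq 0\}$.

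The step I expect to be the main obstacle is showing that~\eqref{opt:DP} is nonempty, since tightening could in principle delete every feasible point. Here I would invoke Assumption~\ref{ass:feast}, which furnishes a point~$x_0$ feasible for every realization~$A\in\seta$ (together with the non-private constraint~$x_0\geq 0$). This~$x_0$ stays feasible for~\eqref{opt:DP} as soon as~$\tilde{A}\in\seta$, and this membership is the delicate ingredient: it follows from the cap in~\eqref{eq:abarpost}, since each entry satisfies~$\inf_{\seta}a^0_{i,j}\leq a^0_{i,j}\leq\bar{a}^0_{i,j}\leq\sup_{\seta}a^0_{i,j}$, placing~$\tilde{A}$ within the publicly known per-entry bounds that define~$\seta$ under Assumption~\ref{ass:bounded}. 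Hence~$x_0$ lies in the feasible region of~\eqref{opt:DP}, so the private problem is feasible and, under the same well-posedness as~\eqref{opt:primal}, admits a solution~$\tilde{x}^*$. Being feasible for~\eqref{opt:DP}, this~$\tilde{x}^*$ satisfies the original, non-private constraints by the containment above, which completes the argument.
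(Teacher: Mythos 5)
Your argument is correct in substance and proves slightly more than the paper's own proof does, but it reaches feasibility of \eqref{opt:DP} by a different route, and that route contains the one delicate step worth flagging. The paper's proof never establishes $\tilde{A}\geq A$ explicitly; it works entirely from the other side, showing $\tilde{A}\leq\bar{A}$ entrywise (where $\bar{A}_{i,j}=\sup_{\seta}a_{i,j}$), deducing $\{x:\bar{A}x\leq b\}\subseteq\{x:\tilde{A}x\leq b\}$ for $x\geq 0$, and then proving the set identity $\{x:\bar{A}x\leq b\}=\bigcap_{A\in\seta}\{x:Ax\leq b\}$ so that Assumption~\ref{ass:feast} yields non-emptiness. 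The containment $\{x:\tilde{A}x\leq b,\,x\geq 0\}\subseteq\{x:Ax\leq b,\,x\geq 0\}$ --- which is the actual claim of the theorem --- is left to the surrounding discussion in the paper, whereas you derive it cleanly from $z_{i,j}\geq -s_i$ in Lemma~\ref{lem:t_lap_mech}, the shift $s_i$ in \eqref{eq:tildeaij}, and $a^0_{i,j}\leq\sup_{\seta}a^0_{i,j}$ in \eqref{eq:abarpost}. That part of your write-up is airtight and is a genuine improvement in explicitness.

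The delicate step is exactly the one you identified: the claim $\tilde{A}\in\seta$. Assumption~\ref{ass:bounded} only says $\seta$ is bounded with public bounds; it does not say $\seta$ is the entrywise box determined by those bounds. If, say, $\seta$ were a two-element set, $\tilde{A}$ would almost surely lie outside it even though every entry respects the per-entry bounds, and then Assumption~\ref{ass:feast} would not directly hand you a point satisfying $\tilde{A}x_0\leq b$. To be fair, the paper's route leans on the same implicit rectangularity: the inclusion $\bigcap_{A\in\seta}\{x:Ax\leq b\}\subseteq\{x:\bar{A}x\leq b\}$ requires the per-entry suprema to be simultaneously attainable (for $x\geq 0$ one only gets $\bar{a}_i\cdot x\geq\sup_{A\in\seta}(a_i\cdot x)$, which points the wrong way for a general $\seta$). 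So your proof and the paper's stand or fall together on reading $\seta$ as a box; under that reading both are valid. If you want to match the paper's mechanics while keeping your stronger containment argument, replace ``$\tilde{A}\in\seta$'' with ``$\tilde{A}\leq\bar{A}$ entrywise, hence for $x\geq 0$ every point of $\{x:\bar{A}x\leq b\}$ is feasible for \eqref{opt:DP},'' and then invoke the identity between $\{x:\bar{A}x\leq b\}$ and the intersection in Assumption~\ref{ass:feast}.
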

\begin{proof}
   See Appendix~\ref{pf:feasible}.
\end{proof}


Theorem~\ref{lem:feasible} guarantees that after privacy is implemented, the resulting optimization problem is feasible. Since Algorithm~\ref{algo:solve} only tightens the constraints, this implies that a privatized solution~$\tilde{x}^*$ always exists and is in the feasible set of the original, non-private optimization problem.

In the next section, we will analyze the accuracy of the solution to the privatized problem.

\section{Solution Accuracy}
In this section, we solve Problem~\ref{prob:accurate}. To do so, we compute an upper bound on change in cost between the nominally generated solution and the privately generated solution. This bound depends on (i) the Lipschitz constant of the objective function, (ii) the largest feasible solution of the original, non-private constraint coefficient matrix, 
(iii) the largest possible constraint coefficients allowable from~$\seta$, and (iv) the ``closeness" of the private and non-private feasible regions. 

For (i), we state this as an assumption below.

\begin{assumption}\label{ass:lips}
    The objective function~$g:\mathbb{R}^n\to\mathbb{R}$ is~$L$-Lipschitz with respect to the~$\ell_2$-norm on~$\mathbb{R}^n$.
\end{assumption}
\noindent For (ii), we define
\begin{equation}\label{eq:abar}
    \bar{A} = [\sup_{\seta} a_{i,j}]_{i\in[m], j\in[n]}
\end{equation}
as the matrix where each entry is its largest value in the set of all constraint coefficient matrices~$\seta$.
For (iii), we define 
\begin{equation}\label{eq:feas_set}
F(A) = \{x \in \mathbb{R}^n : Ax \leq b\}
\end{equation}
as the feasible region of the original, non-private constraints given a choice of~$A$. Then we define
\begin{equation}\label{eq:xbar}
    \bar{x} \in \argmax_{x \in F(A)} \|x\|_2, 
\end{equation}
which is any element of largest~$2$-norm in the feasible region generated by the original, non-private constraints.

For (iv), two ``close'' sets of linear inequalities have a bounded difference in their feasible regions. 

\begin{lemma}[Perturbation Bound; \cite{hoffman1952approximate}]\label{lem:perturbation}
    Given~$Ax \leq b$ with~$A \in \mathbb{R}^{m \times n}$
    and~$b \in \mathbb{R}^m$, consider~$F(A)$ from~\eqref{eq:feas_set}. 
    For a matrix~$\hat{A}\in\mathbb{R}^{m\times n}$ and vector~$\hat{b}\in\mathbb{R}^m$, let~$\hat{x}$ be such that
        $\hat{A}\hat{x}\leq \hat{b}.$
    Then there exists an~$x\in F(A)$ such that
    \begin{equation}
        \norm{x-\hat{x}}_2\leq H_{2, 2}(A)\norm{\left[(A-\hat{A})\hat{x}-(b-\hat{b})\right]^+}_2,
    \end{equation}
    where~$[\cdot]^+$ is the projection onto the non-negative orthant of~$\mathbb{R}^m$, 
    and~$H_{2, 2}(A)$ is the Hoffman constant of~$A$, i.e., 
    \begin{equation}
        H_{2, 2}(A) = \max_{J\in\mathcal{J}(A)} \frac{1}{\min\big\{\|A_J^T v\|_2: v\in\mathbb{R}^{J}_+, \norm{v}_2 = 1\big\} },
    \end{equation}
    where (i) $\mathcal{J}(A) = \{J\subseteq [m]: \seta_J(\mathbb{R}^n)+\mathbb{R}_+^{|J|} = \mathbb{R}^{|J|}\}$ is the set of all~$J$ such that the set-valued mapping~$x\to A_Jx+\mathbb{R}_+^{|J|}$ is surjective, (ii) $A_J$ is the matrix formed by deleting all rows of~$A$ whose indices are not in~$J$, and (iii) we define~$\seta_J(\mathbb{R}^n) = \{z\in\mathbb{R}^n : A_Jz\leq b_J\}$, where~$b_J$ is
    formed by deleting entries of~$b$ whose
    indices are not in~$J$. 
\end{lemma}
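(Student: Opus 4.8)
The plan is to reduce the claim to the classical Hoffman error bound for a single polyhedron and then control the constraint residual by the perturbation. First I would invoke the standard Hoffman inequality for the nonempty polyhedron $F(A)$: for every point $\hat{x}\in\mathbb{R}^n$ there is a point of $F(A)$ within Euclidean distance $H_{2,2}(A)\,\norm{[A\hat{x}-b]^+}_2$ of it, i.e.\ $\operatorname{dist}(\hat{x}, F(A)) \leq H_{2,2}(A)\,\norm{[A\hat{x}-b]^+}_2$, where $H_{2,2}(A)$ is exactly the constant written in the statement. I would then take $x$ to be the Euclidean projection of $\hat{x}$ onto $F(A)$, which exists and is unique because $F(A)$ is closed, convex, and nonempty; by definition of the projection, $\norm{x-\hat{x}}_2 = \operatorname{dist}(\hat{x}, F(A))$, so it remains only to bound the residual $\norm{[A\hat{x}-b]^+}_2$ by the perturbation term.

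Second, I would bound this residual using the feasibility of $\hat{x}$ for the perturbed system. Writing $A\hat{x}-b = (A-\hat{A})\hat{x} + (\hat{A}\hat{x}-\hat{b}) + (\hat{b}-b)$ and using $\hat{A}\hat{x}\leq\hat{b}$, the middle term is entrywise nonpositive, so $A\hat{x}-b \leq (A-\hat{A})\hat{x}-(b-\hat{b})$ holds entrywise. The positive-part operator $[\cdot]^+$ is entrywise nondecreasing and returns a nonnegative vector, hence $[A\hat{x}-b]^+ \leq [(A-\hat{A})\hat{x}-(b-\hat{b})]^+$ entrywise, and since both sides are nonnegative this inequality is preserved under the $\ell_2$-norm. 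Combining this with the Hoffman inequality of the first step yields $\norm{x-\hat{x}}_2 \leq H_{2,2}(A)\,\norm{[(A-\hat{A})\hat{x}-(b-\hat{b})]^+}_2$, which is exactly the claim.

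The main obstacle is the classical Hoffman bound itself, together with the identification of the sharp constant $H_{2,2}(A)$ as the stated maximum over $J\in\mathcal{J}(A)$ of the reciprocal of $\min\{\norm{A_J^Tv}_2 : v\in\mathbb{R}^{J}_+,\ \norm{v}_2=1\}$. Proving that from scratch would require showing that the distance to $F(A)$ is globally Lipschitz in the residual and that only the ``active'' subsystems indexed by surjective $J$ govern the worst-case constant, which is a compactness-and-duality argument over the normal cones of $F(A)$. Since this is precisely the cited result of~\cite{hoffman1952approximate}, I would invoke it directly rather than reprove it; the genuinely new content here is the short residual-reduction step of the second paragraph, which is where the perturbed data $(\hat{A},\hat{b})$ enters and where monotonicity of $[\cdot]^+$ is used. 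A minor point worth checking is that $\mathcal{J}(A)$ is nonempty so that $H_{2,2}(A)$ is well defined, which holds because the full index set $[m]$ belongs to $\mathcal{J}(A)$ under the stated surjectivity convention whenever $F(A)$ is nonempty.
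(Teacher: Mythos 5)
The paper does not prove this lemma at all: it is imported verbatim as a cited result from~\cite{hoffman1952approximate} (in the form with the explicit constant~$H_{2,2}(A)$ studied in the later literature on Hoffman constants), so there is no in-paper proof to compare against. Your proposal is nevertheless a correct derivation of the stated perturbation form from the classical Hoffman error bound: the decomposition $A\hat{x}-b=(A-\hat{A})\hat{x}+(\hat{A}\hat{x}-\hat{b})+(\hat{b}-b)$, the observation that the middle term is entrywise nonpositive, and the monotonicity of $[\cdot]^+$ followed by monotonicity of $\norm{\cdot}_2$ on the nonnegative orthant together give $\norm{[A\hat{x}-b]^+}_2\leq\norm{[(A-\hat{A})\hat{x}-(b-\hat{b})]^+}_2$, which combined with $\operatorname{dist}(\hat{x},F(A))\leq H_{2,2}(A)\norm{[A\hat{x}-b]^+}_2$ yields exactly the claim. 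You are also right that the only heavy ingredient is the Hoffman bound itself with the sharp constant, which is appropriately invoked rather than reproved.

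One side remark in your last paragraph is wrong, though it does not affect the argument: the full index set $[m]$ need \emph{not} belong to $\mathcal{J}(A)$ merely because $F(A)$ is nonempty. For instance, with $n=1$, $m=2$, and $A=(1,-1)^{T}$, the map $x\mapsto A x+\mathbb{R}_+^{2}$ has image $\{y\in\mathbb{R}^2: y_1+y_2\geq 0\}\neq\mathbb{R}^2$, yet $F(A)$ is a nonempty interval for suitable $b$. Well-definedness of $H_{2,2}(A)$ instead follows because $\mathcal{J}(A)$ always contains the (vacuously surjective) subsets of active indices singled out in the cited characterization, with the empty set handled by convention.
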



Next, we bound the expected change in cost.

\begin{theorem}[Solution to Problem~\ref{prob:accurate}]\label{thm:lp_accuracy_e}
    Fix an adjacency parameter~$k > 0$, 
    fix privacy parameters~$\epsilon>0$ and~$\delta\in[0, \frac{1}{2})$, and let Assumptions~\ref{ass:bounded}-\ref{ass:lips} hold. Let~$x^*$ be the solution to Problem~\eqref{opt:primal} and~$\tilde{x}^*$ be the solution to Problem~\eqref{opt:DP}. 
    Then
\begin{equation}
    \Expectation{|g(x^*)- g(\tilde{x}^*)|} \leq L\norm{\bar{x}}_2H_{2,2}(A)\xi,
\end{equation}
where~$\xi$ is from~\eqref{eq:xi},~$H_{2, 2}(A)$ is from Lemma~\ref{lem:perturbation},
$\bar{A}$ is from~\eqref{eq:abar}, 
and~$\bar{x}$ is from~\eqref{eq:xbar}.
\end{theorem}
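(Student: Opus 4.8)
The plan is to reduce the cost gap to a distance between feasible points and then invoke the Hoffman-type Perturbation Bound of Lemma~\ref{lem:perturbation}. First I would remove the absolute value. Because each private entry satisfies $\bar{a}^0_{i,j}\geq a^0_{i,j}$ (the shift $s_i$ dominates the symmetric noise $z_{i,j}$, and the clip in~\eqref{eq:abarpost} only lowers $\tilde a^0_{i,j}$ back toward $a^0_{i,j}$), the private matrix $\tilde{A}$ dominates $A$ entrywise, so the feasible region of~\eqref{opt:DP} is contained in that of~\eqref{opt:primal}. Hence $\tilde{x}^*$ is feasible for~\eqref{opt:primal} (consistent with Theorem~\ref{lem:feasible}) and $g(\tilde{x}^*)\leq g(x^*)$, which lets me write $\Expectation{|g(x^*)-g(\tilde{x}^*)|}=\Expectation{g(x^*)-g(\tilde{x}^*)}$.

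Next I would exhibit a single feasible point of~\eqref{opt:DP} provably close to $x^*$, so that optimality of $\tilde{x}^*$ upper-bounds the gap. Applying Lemma~\ref{lem:perturbation} with the privatized system as the reference and $\hat{A}=A$, $\hat{b}=b$, $\hat{x}=x^*$ yields a point $y$ with $\tilde{A}y\leq b$ satisfying $\norm{y-x^*}_2\leq H_{2,2}(\tilde{A})\norm{[(\tilde{A}-A)x^*]^+}_2$, where $H_{2,2}(\tilde{A})$ is the Hoffman constant of the reference. Two simplifications apply: since $\tilde{A}\geq A$ entrywise and $x^*\geq 0$, the vector $(\tilde{A}-A)x^*$ is already nonnegative, so $[\cdot]^+$ acts as the identity; and by the operator-norm (hence Frobenius) bound together with $x^*\in F(A)$ and the definition~\eqref{eq:xbar} of $\bar{x}$, $\norm{(\tilde{A}-A)x^*}_2\leq \norm{\tilde{A}-A}_F\norm{x^*}_2\leq \norm{\tilde{A}-A}_F\norm{\bar{x}}_2$. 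Combining with $L$-Lipschitzness (Assumption~\ref{ass:lips}) gives $g(x^*)-g(\tilde{x}^*)\leq g(x^*)-g(y)\leq L\norm{x^*-y}_2\leq L\norm{\bar{x}}_2 H_{2,2}(\tilde{A})\norm{\tilde{A}-A}_F$, which after replacing $H_{2,2}(\tilde{A})$ by $H_{2,2}(A)$ (see below) matches the form in the theorem.

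It then remains to take expectations and show $\Expectation{\norm{\tilde{A}-A}_F}\leq\xi$, splitting along the two branches of~\eqref{eq:xi}. When $a_{i,j}+2s_i<\sup_{\seta}a_{i,j}$ for every entry, the truncation bound $z_{i,j}\leq s_i$ guarantees the clip never activates, so each nonzero entry of $\tilde{A}-A$ equals $s_i+z_{i,j}$; I would bound $\Expectation{\norm{\tilde{A}-A}_F}\leq\sqrt{\Expectation{\norm{\tilde{A}-A}_F^2}}$ by Jensen and expand row by row using $\Expectation{z_{i,j}}=0$, the mean shift $s_i$ from~\eqref{eq:tildeaij}, and the variance bound $\Expectation{z_{i,j}^2}\leq 2(k/\epsilon)^2$ for the truncated Laplace law of Lemma~\ref{lem:t_lap_mech}, which reassembles into the first branch of $\xi$. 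In the complementary case I would use the deterministic envelope $0\leq \bar{a}^0_{i,j}-a^0_{i,j}\leq \sup_{\seta}a_{i,j}-a_{i,j}$ enforced by the clip, giving $\norm{\tilde{A}-A}_F\leq\norm{A-\bar{A}}_F$ with $\bar{A}$ from~\eqref{eq:abar}, i.e. the second branch.

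The main obstacle is the treatment of the Hoffman constant. Producing a genuinely feasible point of~\eqref{opt:DP} forces the reference system in Lemma~\ref{lem:perturbation} to be the privatized (or worst-case $\bar{A}$) system, whose Hoffman constant is itself random, whereas the claimed bound carries the deterministic $H_{2,2}(A)$; reconciling these—either through a domination argument controlling $H_{2,2}(\tilde{A})$ by $H_{2,2}(A)$, or by pulling a deterministic constant out of the product before taking the expectation of $\norm{\tilde{A}-A}_F$—is the step requiring the most care. A secondary technicality is that Lemma~\ref{lem:perturbation} is stated for inequalities $Ax\leq b$ alone, so the nonnegativity constraint $x\geq 0$ must be folded into the constraint matrix before the Hoffman constant is invoked.
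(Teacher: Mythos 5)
Your reduction of the cost gap to a point--distance estimate is structurally sound, and your treatment of the Frobenius-norm term (Jensen's inequality, zero-mean truncated-Laplace noise with second moment $2(k/\epsilon)^2$, and the deterministic clipping envelope giving $\norm{A-\bar{A}}_F$ in the second branch of~\eqref{eq:xi}) matches the paper's computation essentially line for line. The genuine gap is exactly the one you flag at the end: by taking the privatized system as the reference in Lemma~\ref{lem:perturbation}, you terminate with $H_{2,2}(\tilde{A})$, a random quantity, and you give no argument---nor is one readily available, since the Hoffman constant is not monotone under entrywise domination of the constraint matrix---for replacing it with the deterministic $H_{2,2}(A)$ that appears in the theorem. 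As written, your chain of inequalities establishes a bound involving $\Expectation{H_{2,2}(\tilde{A})\norm{\tilde{A}-A}_F}$, which is not the claimed statement; pulling a constant out of that expectation would require a uniform bound on $H_{2,2}(\tilde{A})$ over all realizations of the mechanism, which you do not supply.

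The paper avoids this by orienting Lemma~\ref{lem:perturbation} the other way: the reference system is the non-private one, with $\hat{A}=\tilde{A}$, $\hat{b}=b$, and $\hat{x}=\tilde{x}^*$ (legitimate because $\tilde{A}\tilde{x}^*\leq b$), which yields a point of $F(A)$ within $H_{2,2}(A)\norm{[(A-\tilde{A})\tilde{x}^*]^+}_2$ of $\tilde{x}^*$, so the Hoffman constant of the fixed matrix $A$ appears for free and $\norm{\tilde{x}^*}_2\leq\norm{\bar{x}}_2$ follows from Theorem~\ref{lem:feasible}. There is a trade-off worth noting: the paper then identifies that nearby feasible point with $x^*$ itself, whereas your direction correctly converts ``a feasible point of~\eqref{opt:DP} near $x^*$'' into a bound on the optimal-value gap via optimality of $\tilde{x}^*$---architecturally the cleaner way to control $g(x^*)-g(\tilde{x}^*)$---but it is precisely the direction that makes the stated constant unobtainable without further work. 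To prove the theorem as stated you need the paper's orientation of the perturbation lemma; your secondary point about folding $x\geq 0$ into the constraint matrix before invoking the Hoffman constant applies to both orientations.
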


\begin{proof}
See Appendix~\ref{pf:lp_accuracy_e}.
\end{proof}

Theorem~\ref{thm:lp_accuracy_e} 
allows users to anticipate performance loss before privacy is implemented and before a solution is computed, which enables the calibration of privacy based on its tradeoffs with performance.

\section{Application to Policy Synthesis}\label{sec:cmdp}
This section applies our developments to constrained Markov decision processes,
which we define next. 

\begin{definition}[Constrained Markov Decision Process;~\cite{altman2021constrained}] \label{def:cmdp}
A Constrained Markov Decision Process (CMDP) is the tuple~$\mathcal{M} = (\mathcal{S}, \mathcal{A}, r, \mathcal{T}, \mu, f, f_0)$, where $\mathcal{S}$ and $\mathcal{A}$ are the finite sets of states and actions, 
respectively, and~$|\mathcal{S}| = p$ and~$|\mathcal{A}| = q$. Then, $r:\mathcal{S}\times \mathcal{A}\rightarrow \mathbb{R}$ is the reward function, $\mathcal{T}:\mathcal{S}\times \mathcal{A}\rightarrow \simplex(\mathcal{S})$ is the transition probability function, $\mu\in\simplex(s)$ is a probability distribution over the initial states, $f_i:\mathcal{S}\times \mathcal{A}\to [0, f_{\text{max}, i}]$ for~$i\in[N]$ are immediate costs, and~$\Expectation{\sum_{t=0}^{\infty}\gamma^t f(s_t)}\leq f_0$,~$f_{0,i}\in\mathbb{R}$  are constraints. 
\end{definition}

We let~$\mathcal{T}(s, \action, y)$ denote the probability of transitioning from state~$s$ to state~$y$ when taking action~$\action$.
We consider CMDPs in which the constraint function~$f$
is linear. Then the constraints
can be written as~$AX - f_0 \leq 0$,
where~$A \in \mathbb{R}^{pq\times N}$, where~$p$, $q$, and~$f_0$ are from
    Definition~\ref{def:cmdp} and~$X$ is the decision
    variable in policy synthesis,
    described next. 

Solving an MDP, i.e., computing the optimal policy, or list of commands to take in each state, can be done with the linear program~\cite{puterman2014markov}
    \begin{equation}
        \begin{aligned}
        &\begin{aligned}
            \underset{ x_{\pi}}{\operatorname{maximize}} \quad &\sum_{s\in S}\sum_{\action\in A}&r(s, \action)x_{\pi}(s, \action)
        \end{aligned}
            \\
            &\begin{aligned}
                &\text{ s.t. } \,\, x_{\pi}(s, \action) \geq 0 \quad f(x_{\pi}(s, \action)) \leq f_0\quad \forall s\in\mathcal{S}, \action\in\mathcal{A} \\
                &\sum_{\action'\in \mathcal{A}} \!\! x_{\pi}(s'\!,\action') \!-\! \gamma
               \!\! \sum_{s\in \mathcal{S}}\!\sum_{\action\in \mathcal{A}}\!\!x_{\pi}(s,\action)\mathcal{T}(s,\action,s') \!=\! \mu(s')~\forall s' \!\in\! \mathcal{S}. 
            \end{aligned}
        \end{aligned}    
        \tag{MDP-D}\label{opt:MDP-D}
    \end{equation} 
The optimal policy~$\pi^*$ can be calculated 
from the optimum~$\left\{ x^*_{\pi}(s,\action)\right\} _{s\in\mathcal{S},\action\in\mathcal{A}}$ via 
    $\pi^*(\action \mid s)=\frac{x^*_\pi(s,\action)}{\sum_{\action'\in\mathcal{A}}x^*_\pi(s,\action')}$.
Such a policy admits a value function~$v_{\pi}$, which is defined as
    $v_{\pi}(s) =  \Expectation{\sum_{t=0}^{\infty} \gamma^t r(s_t, \pi(s_t))},$
and is easily computable~\cite{puterman2014markov}.
Constraints that may be encoded by~$A$ include enforcing a probability of reaching a goal state and safety, i.e., setting a maximum amount of visits to a set of hazardous states, with hazard values assigned to each state. 

This type of safety extends the example in~\cite{chow2018lyapunov} by allowing states to have varying ``hazard" factors, which are the immediate costs for each state-action pair. Let~$\SH \subseteq \mathcal{S}$ be the set of hazardous states, and let~$f(s_t) = \beta_s\mathbb{I}\{s_t\in\SH\}$, 
where~$\mathbb{I}$ is an indicator function, 
which encodes that the agent incurs a penalty of~$\beta_s$ for occupying state~$s$. 
We then have the constraint that
    $\Expectation{\sum_{t=0}^{\infty} \gamma^t \beta_{s_t} f(s_t)|s_0, \pi}\leq f_0$, or, equivalently,~$f(x_{\pi}(s, \action)) = \beta_{s_t}\gamma\mathbb{I}\{x_{\pi}(s, \action)\in\SH\}\leq f_0$. As a linear constraint, this takes the form $AX - f_0 \leq 0$,
    where~$A$ is a vector since it is a single constraint, and entry~$i$ of~$A$ is defined as
\begin{equation}
    a_{i} =\begin{cases}
        \beta_{s_t}\gamma & \text{if }x(s_t, \action)\in \SH\\
        0 & \textnormal{otherwise}
    \end{cases}.
\end{equation}

These are the constraints that we will privatize. 
We will also 
empirically evaluate the ``cost of privacy'' using the metric  in~\cite{gohari2020privacy, benvenuti2023differentially}, namely 
the 
percent decrease in the value function, equal to~$\big((v_{\pi^*}(s_0)- v_{\tilde{\pi}^*}(s_0))/v_{\pi^*}(s_0)\big) \times 100\%$.

We apply Algorithm~\ref{algo:solve} to a CMDP representing the system in Figure~\ref{fig:safety_grid}, and we privatize the hazard values of each hazardous state. We set~$\beta_i = 1$ for all~$i\in\SH$,
and we define~$\seta$ so that~$\sup_{\seta} a_{i,j} = 3$.
%
%
%
The cost of privacy for~$\epsilon\in[0.01, 5]$ averaged over~$100$ samples is shown in Figure~\ref{fig:MDP_example}. With strong privacy, i.e.,~$\epsilon = 2$, we see a~$18.35\%$ reduction in performance, while with more typical privacy levels, i.e.,~$\epsilon= 3$, we see only a~$9.45\%$ reduction in performance, indicating that our method can simultaneously provide both desirable
privacy protections and desirable levels of performance.

\section{Conclusion}\label{sec:conclusion}
We presented a differentially private method for solving convex optimization problems with linear constraints while ensuring that constraints
are never violated. 
Future work will address the privatization of 
nonlinear and stochastic constraints while ensuring
their satisfaction as well. 

\begin{figure}
    \centering
    \includegraphics[scale=0.8]{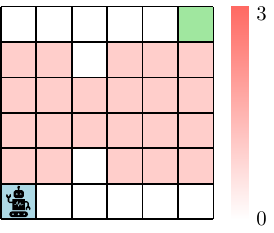}
    \caption{Grid in which the agent starts at the blue state,
    its goal is the green state, and hazardous states are red. 
    }
    \label{fig:safety_grid}
\end{figure}
    \begin{figure}
    \centering
        \input{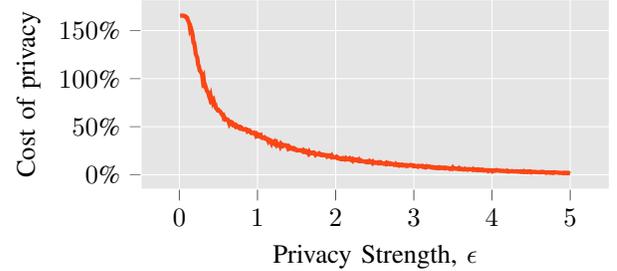}
        \caption{Cost of privacy for
        the example shown in Figure~\ref{fig:safety_grid} for
        a range of values of~$\epsilon$ implemented with
        Algorithm~\ref{algo:solve}.}
        \label{fig:MDP_example}
    \end{figure}

\bibliographystyle{IEEEtran}
\bibliography{main}
\appendix
    \subsection{Proof of Theorem~\ref{thm:privacy}}\label{pf:privacy}
    From Lemma~\ref{lem:t_lap_mech}, computing~$\tilde{a}^0_{i,j} = a^0_{i,j} +s_i +z_{i,j}$ is~$(\epsilon, \delta)$-differentially private if~$z_{i,j}\sim\mathcal{L}_T(\frac{\Delta}{\epsilon}, \mathcal{S}_i)$, where $\mathcal{S}_i = [-s_i, s_i]$ and
        $s_i = \left[\frac{k}{\epsilon}\log\left(\frac{n_i^0(e^{\epsilon}-1)}{\delta}+1\right) \right].$
    Additionally, computing~$\bar{a}^{0}_{i,j} = \min\{\tilde{a}^0_{i,j}, \sup_{\seta} a^{0}_{i,j}\}$ is merely post-processing of differentially private data, and thus from Lemma~\ref{lem:arbitrary}, maintains~$(\epsilon, \delta)$-differential privacy. Repeating this for each~$i\in[m]$, each vector of non-zero constraint coefficients is~$(\epsilon, \delta)$-differentially private, and from Lemma~\ref{lem:parallel-comp}, replacing the non-zero elements in~$A$ with~$\bar{a}_{i,j}$ to form~$\tilde{A}$ is~$(\epsilon, \delta)$-differentially private.
    \mh{Be more specific here than ``the matrix compose of each vector...'' It seems obvious how to put these together, but spelling it out won't take much space and can prevent ambiguity.}\Alex{Made this more clear.}
    From Lemma~\ref{lem:arbitrary}, it then follows that~$\tilde{x}^*$ is~$(\epsilon, \delta)$-differentially private by virtue of being post-processing of the differentially private quantity~$\tilde{A}$.

    \subsection{Proof of Theorem~\ref{lem:feasible}}\label{pf:feasible}
     By definition, the constraint matrix~$\tilde{A}$ is component-wise less than or equal to the matrix~$\bar{A}$ in which~$\bar{A}_{i,j}
     = \sup_{\seta} a_{i,j}$ for all~$i \in [n]$ and~$j \in [m]$. 
    Since~$x\geq 0$ and the vector~$b$ is fixed, we have that the set~$\{x:\bar{A}x\leq b\}$ is contained
    in~$\{x:\tilde{A}x\leq b\}$ due to the fact that $a_{i,j}\leq \sup_{\seta} a_{i,j}$, and as a result, we know that $\{x:\tilde{A}x\leq b\}\supseteq \{x:\bar{A}x\leq b\}$. 
    
    We will now show that the sets~$\bigcap_{A\in\seta}\{x:Ax\leq b\}$ and~$\{x:\bar{A}x\leq b\}$ are equal. 
    For any~$x$ in the first set, 
    we know that $a_i\cdot x\leq b_i$ for all~$A \in \seta$. 
    By definition of the supremum, it follows then that~$\sup_{\seta} (a_i\cdot x)\leq b_i$, and therefore~$\bar{A}x\leq b$. As a result, if~$x\in\bigcap_{A\in\seta}\{z:Az\leq b\}$, then~$x\in\{z:\bar{A}z\leq b\}$. 
        We now show that the reverse is true. If~$\bar{A}x\leq b$, then~$Ax\leq b$ for all~$A \in \seta$
     by definition of the supremum. 
     Thus, if~$x\in\{z:\bar{A}z\leq b\}$, then we also have~$x\in\bigcap_{A\in\seta}\{z:Az\leq b\}$. 
     
    Since we have~$\{x:\tilde{A}x\leq b\}\supseteq \{x:\bar{A}x\leq b\}$ and~$\{x:\bar{A}x\leq b\} = \bigcap_{A\in\seta}\{x:Ax\leq b\}$, we know that~$\{x:\tilde{A}x\leq b\}\supseteq\bigcap_{A\in\seta}\{x:Ax\leq b\}$. From Assumption~\ref{ass:feast}, the set~$\bigcap_{A\in\seta}\{x:Ax\leq b\}$ is non-empty, and therefore the set~$\{x:\tilde{A}x\leq b\}$ is non-empty and thus is yields a feasible optimization problem.

\subsection{Proof of Theorem~\ref{thm:lp_accuracy_e}}\label{pf:lp_accuracy_e}
The Lipschitz property of~$g$ from Assumption~\ref{ass:lips} gives
\begin{equation}\label{eq:sub_norm_here}
    \Expectation{|g(x^*)- g(\tilde{x}^*)|} \leq L\Expectation{\norm{x^{*} - \tilde{x}^*}}.
\end{equation}
Noting that~$b$ remains constant \mh{between the feasible regions [thing with A] and [thing with the other A]}\Alex{Added!} between the feasible regions~$\{x:Ax\leq b\}$ and~$\{x:\tilde{A}x\leq b\}$, we apply Lemma~\ref{lem:perturbation} and the linearity of the expectation to obtain to obtain
    \begin{multline}\label{eq:bound_by_hoffman1}
        \Expectation{\norm{x^*- \tilde{x}^*}_2} \leq 
        H_{2,2}(A)\Expectation{\norm{\left[A-\tilde{A}\right]^+}_{F}\norm{\tilde{x}^*}_{2}}. 
    \end{multline}
    Since~$\tilde{x}$ is in a subset of the feasible space in the non-private problem, the largest possible~$\tilde{x}$ is bounded by the largest feasible~$x\in F(A)$, where~$F(A)$ is from~\eqref{eq:feas_set}. We denote this as 
        $\bar{x} \in \argmax_{x \in F(A)} \|x\|_2.$
    Then in~\eqref{eq:bound_by_hoffman1}  we may write the bound
        $\Expectation{\norm{\left[A-\tilde{A}\right]^+}_{F}\norm{\tilde{x}^*}_{2}} \leq \Expectation{\norm{\left[A-\tilde{A}\right]^+}_{F}}\norm{\bar{x}}_{2}$. 
    Next we bound~$\norm{[A-\tilde{A}]^+}_F$. First, we define~$Z=\tilde{A}-A$ and, using the non-expansive property of the projection onto the non-negative orthant, we obtain
\begin{equation}\label{eq:sub_z_here}
     \Expectation{\norm{\left[A-\tilde{A}\right]^+}_{F}}\norm{\bar{x}}_{2} \leq \Expectation{\norm{Z}}\norm{\bar{x}}_{2}.
\end{equation}
Using the definition of the Frobenius norm and Jensen's inequality, we use the preceding inequality to find the bound
    $H_{2,2}(A)\norm{\bar{x}}_{2}\Expectation{\norm{Z}}\leq 
    H_{2,2}(A)\norm{\bar{x}}_{2}\sqrt{\trace{\Expectation{Z^TZ}}}$.
Now we compute the diagonal entries of~$\Expectation{Z^TZ}$.
We break~$Z$ down into two cases: the case where there exists an~$i$ and~$j$ such that~$a_{i,j}+2s_i\geq \sup_{\seta} a_{i,j}$, and the case where~$a_{i,j}+2s_i< \sup_{\seta} a_{i,j}$ for all~$i$ and~$j$. 
Starting with the case~$a_{i,j}+2s_i< \sup_{\seta} a_{i,j}$ for all~$i$ and~$j$, we have
\begin{multline}
    (Z^TZ)_{1,1} = (s_1+\eta_{1,1})(s_1+\eta_{1,1})+(s_2+\eta_{2,1})(s_2+\eta_{2,1})\\+\cdots+(s_m+\eta_{m,1})(s_m+\eta_{m,1})
=s_1^2+2s_1\eta_{1,1}+\eta_{1,1}^2\\+s_2^2+2s_2\eta_{2,1}+\eta_{2,1}^2+\cdots+s_m^2+2s_m\eta_{m,1}+\eta_{m,1}^2.
\end{multline}
Each~$\eta^j_{i}$ has~$0$ mean and~$2\left(\frac{k}{\epsilon}\right)^2$ second moment. Thus, 
    $\Expectation{(Z^TZ)_{1,1}} 
    =2n^0_1\left(\frac{k}{\epsilon}\right)^2+\sum_{i=1}^{n^0_1} s_i^2$.
This pattern holds for each diagonal entry, so we have
$\Expectation{(Z^TZ)_{i,i}}=2n^0_i\left(\frac{k}{\epsilon}\right)^2+\sum_{i=1}^{n^0_i} s_i^2.$
The trace is then the sum of these identical diagonal entries, and because $Z^TZ\in\mathbb{R}^{m\times m}$, there are~$m$ diagonal entries, and thus
we may write the equality 
    $\sqrt{\trace{\Expectation{Z^TZ}}} = \sqrt{\sum_{j=1}^{m}\left(2m\left(\frac{k}{\epsilon}\right)^2 n^0_j+n^0_j s_j^2\right)}$,
where~$s_i$ is the support of the truncated Laplace mechanism for constraint~$i$. In the event that there exists an~$i$ and~$j$ such that~$a_{i,j}+2s_i\geq \sup_{\seta} a_{i,j}$, define~$\bar{A} = [\sup_{\seta} a_{i,j}]_{i\in[m], j\in[n]}$, that is, the matrix such that every entry in~$A$ takes the maximum possible value allowed by the set~$\seta$. In this case, ${Z_{ij} \leq (\bar{A}-A)_{ij}}$, and we have
    $H_{2,2}(A)\norm{\bar{x}}_{2}\sqrt{\trace{\Expectation{Z^TZ}}} \leq H_{2,2}(A)\norm{\bar{x}}_{2}\norm{A-\bar{A}}_F.$
Defining~$\xi$ in~\eqref{eq:xi}, we substitute~$\xi$ into~\eqref{eq:sub_z_here} to find
    $H_{2,2}(A)\norm{\bar{x}}_{2}\Expectation{\norm{Z}_F} \leq H_{2,2}(A)\norm{\bar{x}}_{2}\xi,$
which we substitute into~\eqref{eq:sub_norm_here} to obtain
the result.

\samepage

\end{document}